\theoremstyle{plain}
\newtheorem{thm}{Theorem}[section]
\newtheorem{lem}[thm]{Lemma}
\theoremstyle{definition}
\newtheorem{rem}[thm]{Remark}
\def\l{\left}
\def\r{\right}
\def\as{\!\mathrel{\mathop:}=} 
\def\fun{\col\hs\to\exrls}
\def\geo{\col[0,1]\to\hs}
\def\nat{\mathbb{N}}
\def\rls{\mathbb{R}}
\def\exrls{(-\infty,\infty]}
\def\lam{\lambda}
\def\gam{\gamma}
\def\sph{\mathbb{S}}
\def\half{\frac{1}{2}}
\renewcommand{\phi}{\varphi}
\def\id{\mathrm{id}}
\def\hm{d_{\mathrm{H}}}
\def\col{\colon}
\def\ol{\overline}
\def\clco{\operatorname{\ol{co}}}
\def\argmin{\operatornamewithlimits{\arg\min}}
\def\diam{\operatorname{diam}}
\def\dom{\operatorname{dom}}
\def\cldom{\operatorname{\ol{dom}}}
\def\mi{\operatorname{Min}}
\def\pb{\overline{p}}
\def\qb{\overline{q}}
\def\rb{\overline{r}}
\def\xb{\overline{x}}
\def\yb{\overline{y}}
\def\yob{\overline{y_1}}
\def\ytb{\overline{y_2}}
\def\zob{\overline{z_1}}
\def\ztb{\overline{z_2}}
\def\uib{\overline{u_i}}
\def\hs{\mathcal{H}}   
\begin{document}
\title[Lipschitz retractions]{Lipschitz retractions in Hadamard spaces via gradient flow semigroups}
\author[M. Ba\v{c}\'ak]{Miroslav Ba\v{c}\'ak}
\date{\today}
\subjclass[2010]{Primary: 53C23. Secondary: 47H20, 54E40, 58D07.}
\keywords{Finite subset space, gradient flow, Hadamard space, Lie-Trotter-Kato formula, Lipschitz retraction.}
\thanks{}

\address{Max Planck Institute for Mathematics in the Sciences, Inselstr. 22, 04 103 Leipzig, Germany}
\email{bacak@mis.mpg.de}

\author[L. V. Kovalev]{Leonid V. Kovalev}
\address{215 Carnegie, Syracuse University, Syracuse, NY 13244, USA}
\email{lvkovale@syr.edu}
\thanks{L. V. Kovalev was supported by the National Science Foundation grant DMS-1362453.}

\begin{abstract}
Let $X(n),$ for $n\in\nat,$ be the set of all subsets of a metric space $(X,d)$ of cardinality at most~$n.$ The set~$X(n)$ equipped with the Hausdorff metric is called a finite subset space. In this paper we are concerned with the existence of Lipschitz retractions $r\col X(n)\to X(n-1)$ for $n\ge2.$ It is known that such retractions do not exist if~$X$ is the one-dimensional sphere. On the other hand L.~Kovalev has recently established their existence in case~$X$ is a Hilbert space and he also posed a question as to whether or not such Lipschitz retractions exist for $X$ being a Hadamard space. In the present paper we answer this question in the positive.
\end{abstract}

\maketitle
\section{Introduction}
Let $(X,d)$ be a metric space. For each $n\in\nat,$ we denote the set of all subsets of~$X$ with cardinality at most~$n$ by~$X(n).$ The set~$X(n)$ equipped with the Hausdorff metric~$\hm$ is called a \emph{finite subset space.} Unlike Cartesian powers $X^n$ or the space of unordered $n$-tuples $X^n/S_n,$ finite subset spaces admit \emph{canonical} isometric embeddings $\iota\col X(n)\to X(n+1).$

Following \cite{kovalev15,kovalev,mostovoy}, we are interested in Lipschitz retractions $r\col X(n)\to X(n-1).$ L. Kovalev proved their existence for~$X$ being a Euclidean space~\cite{kovalev15} and~$X$ being a Hilbert space~\cite{kovalev}. On the other hand, a~result of J.~Mostovoy~\cite{mostovoy} yields that in general there is no continuous mapping $r\col X(n)\to X(n-1)$ with $r\circ\iota =\id$ if~$X$ is the one-dimensional sphere $\sph^1;$ here~$\id$ stands for the identity operator on~$X(n).$ It is therefore natural to ask whether Lipschitz retractions $r\col X(n)\to X(n-1)$ exist if~$X$ is a nonpositively curved metric space. Indeed, this question appears explicitly in \cite[Question 3.3]{kovalev}. Our main result (Theorem \ref{thm:main}) provides the positive solution to this problem.

The solution for Hilbert spaces from \cite{kovalev} is based on the existence of gradient flow trajectories in a finite dimensional subspace, which is assured by the classical ODE theory. In the present paper, we also define the desired retractions via gradient flows of certain convex functionals on (the $n$-th power of) a Hadamard space and make use of the Lie-Trotter-Kato formula proved recently in \cite{lie,stoj}.

Finally, note that, given a Hadamard space $(\hs,d),$ we obtain Lipschitz retractions $r\col \hs(n)\to\hs(n-1)$ with Lipschitz constant $\max(4n^{\frac32}+1,2n^2+n^\half),$ whereas the result from \cite{kovalev} for $X$ being a Hilbert space has Lipschitz constant $\max(2n-1,n^{\frac32}).$  
\section{Preliminaries}
We first recall some basic facts about Hadamard spaces as well as more recent results which shall be used in our proof. For further details, we refer the reader to~\cite{mybook}.

Let $\l(\hs,d\r)$ be a Hadamard space, that is, a complete metric space with geodesics satisfying
\begin{equation} \label{eq:cat}
 d\l(z,x_t\r)^2\leq (1-t)d\l(z,x_0\r)^2+td\l(z,x_1\r)^2- t(1-t)d\l(x_0,x_1\r)^2,
\end{equation}
for each $z,x_0,x_1\in\hs$ and $t\in[0,1],$ where $x_t\as(1-t)x_0 + tx_1$ is a unique point on the geodesic $\l[x_0,x_1\r]$ such that $d\l(x_0,x_t\r)=t d\l(x_0,x_1\r).$ An equivalent (and more geometric) formulation of inequality~\eqref{eq:cat} is the following relation between a triangle with vertices $p,q,r\in\hs$ and its \emph{comparison triangle} with vertices $\pb,\qb,\rb\in\rls^2,$ where $d(p,q)=\l\|\pb-\qb\r\|,d(r,q)=\l\|\rb-\qb\r\|$ and $d(p,r)=\l\|\pb-\rb\r\|.$ If $x\as(1-t)p + tq$ and $y\as(1-s)p + sr$ for some $s,t\in[0,1],$ and we denote their comparison points by $\xb\as(1-t)\pb + t\qb$ and $\yb\as(1-s)\pb + s\rb,$ respectively, inequality~\eqref{eq:cat} implies that
\begin{equation} \label{eq:comparison}
 d\l(x,y\r)\le \l\|\xb-\yb\r\|.
\end{equation}
Here the symbol $\|\cdot\|$ stands for the Euclidean norm on $\rls^2.$

Given two geodesics $\l[x_0,x_1\r]$ and $\l[y_0,y_1\r],$ we have
\begin{equation}  \label{eq:jointconv}
d\l(x_t,y_t\r)\leq (1-t) d\l(x_0,y_0\r)+ td\l(x_1,y_1\r),
\end{equation}
for each $t\in[0,1];$ see \cite[(1.2.4)]{mybook}.

Given a function $f\fun,$ denote its \emph{domain} by $\dom f\as\l\{x\in\hs\col f(x)<\infty\r\}$ and the set of its minimizers by $\mi f\as\l\{x\in\hs\col f(x)=\inf f \r\}.$ We say that a function $f\fun$ is \emph{convex} if for each geodesic $\gamma\geo,$ the function $f\circ\gamma$ is convex. Given a convex lower semicontinuous (lsc, for short) function, its \emph{resolvent} (with parameter $\lam>0$) is given by 
\begin{equation} \label{eq:defres}
J_\lam x\as\argmin_{y\in \hs} \l[f(y)+\frac1{2\lam}d(x,y)^2\r],\qquad x\in \hs,
\end{equation}
and it satisfies the following important inequality
\begin{equation} \label{eq:onestepestim}
  f\l(J_\lam x\r) + \frac1{2\lam} d\l(x,J_\lam x\r)^2 + \frac1{2\lam} d\l(J_\lam x,y\r)^2  \leq f(y)+\frac1{2\lam} d(x,y)^2,
\end{equation}
for every $x,y\in\hs.$ Given $x\in\cldom f,$ the \emph{gradient flow semigroup} associated to~$f$ is defined by
\begin{equation} \label{eq:semigrconstr}
 S_t x\as\lim_{k\to\infty} \l(J_{\frac{t}k}\r)^k x,\qquad x\in\cldom f,
\end{equation}
for every $ t\in[0,\infty).$ Like in Hilbert spaces, the semigroup is comprised of nonexpansive operators, that is
\begin{equation}\label{eq:nonexp}
d\l(S_t x, S_t y\r)\leq d(x,y),  
\end{equation}
for each $t\in[0,\infty)$ and $x,y\in\cldom f.$ The above mentioned theory of gradient flows in Hadamard spaces was first studied by J.~Jost~\cite{jost-ch} and U.~Mayer~\cite{mayer}. A~more recent result from~\cite{ppa} established the asymptotic behavior of a gradient flow. It relies upon the notion of weak convergence in Hadamard spaces, which was introduced by J.~Jost in~\cite{jost94}. Let us recall that a bounded sequence $\l(x_k\r)\subset\hs$ converges \emph{weakly} to a point $x\in\hs$ provided $\lim_{k\to\infty}d\l(P_\gam\l(x_k\r),x\r)=0$ for each geodesic $\gam\geo$ with $x\in\gam.$ Here $P_\gam$ stands for the metric projection onto (the image of) $\gam.$ Now we are ready to state the theorem on asymptotic behavior of a gradient flow. 
\begin{thm} \label{thm:asym}
 Let $f\fun$ be a convex lsc function which attains its infimum on $\hs.$ Then, given $x\in\cldom f,$ the associated gradient flow semigroup weakly converges to a point $x^*\in\mi f.$ 
\end{thm}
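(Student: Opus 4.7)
The plan is to adapt the standard Hilbert-space argument (Brezis, Bruck) to the Hadamard setting, using Fejér monotonicity plus a Kadec–Opial type lemma for the weak convergence in $\hs$.

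First, I would establish that every minimizer $m\in\mi f$ is a fixed point of the whole semigroup. Applying the resolvent inequality \eqref{eq:onestepestim} with $x=y=m$ and using $f(m)=\inf f\le f(J_\lam m)$ forces $d(m,J_\lam m)=0$, so $J_\lam m=m$ for every $\lam>0$; by construction \eqref{eq:semigrconstr}, $S_t m = m$ for all $t\ge 0$. Combined with the nonexpansivity \eqref{eq:nonexp} and the semigroup property $S_{t+s}=S_t\circ S_s$, this yields Fejér monotonicity: the function $t\mapsto d(S_t x, m)$ is nonincreasing for every $m\in\mi f$. In particular, the trajectory $(S_t x)_{t\ge0}$ is bounded.

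Second, I would show $f(S_t x)\downarrow \inf f$ as $t\to\infty$. Applying \eqref{eq:onestepestim} with $y$ a minimizer $m$ and iterating the inequality along the discrete scheme $\bigl(J_{t/k}\bigr)^k x$ gives, after telescoping,
\begin{equation*}
  k\bigl[f\bigl((J_{t/k})^k x\bigr)-\inf f\bigr] \cdot \frac{t}{k} \le \half d(x,m)^2,
\end{equation*}
so $f\bigl((J_{t/k})^k x\bigr)-\inf f \le d(x,m)^2/(2t)$; passing to the limit $k\to\infty$ and using lower semicontinuity of $f$ together with \eqref{eq:semigrconstr} yields $f(S_t x)-\inf f \le d(x,m)^2/(2t)\to 0$.

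Third, I would extract weak cluster points. Because $(S_t x)$ is bounded, every sequence $t_k\to\infty$ admits a subsequence along which $S_{t_k}x$ converges weakly, by the weak sequential compactness theorem of Jost \cite{jost94} in Hadamard spaces. Any such weak limit $x^*$ is a minimizer: convex lsc functions on $\hs$ are weakly lsc (a standard fact, proved via the projection characterization of weak convergence and comparison inequality \eqref{eq:comparison}), so $f(x^*)\le\liminf_k f(S_{t_k}x)=\inf f$.

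Finally, to upgrade subsequential weak convergence to convergence of the entire net, I would invoke a Kadec–Opial lemma for Hadamard spaces: if a bounded curve is Fejér monotone with respect to every point of a nonempty set $F\subset\hs$ and every weak cluster point lies in $F$, then the curve converges weakly to a single element of $F$. Taking $F=\mi f$ finishes the proof. The main obstacle I expect is this last step, since the Opial property in $\hs$ is more delicate than in Hilbert space — it must be handled via projections onto geodesics, and the uniqueness of the weak limit must be argued by showing that two distinct weak cluster points would contradict the simultaneous convergence of $d(S_t x, x^*_1)$ and $d(S_t x, x^*_2)$ combined with the CAT(0) inequality \eqref{eq:cat} applied along the geodesic joining $x^*_1$ and $x^*_2$.
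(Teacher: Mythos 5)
Your proposal is correct and follows essentially the same route as the proof the paper relies on: Theorem~\ref{thm:asym} is quoted from \cite{ppa} (see also \cite[Thm 5.1.16]{mybook}), and the argument there is exactly your chain of Fej\'er monotonicity with respect to $\mi f$, the estimate $f\l(S_tx\r)-\inf f\le d(x,m)^2/(2t)$ obtained by telescoping \eqref{eq:onestepestim}, weak sequential compactness of bounded sequences plus weak lower semicontinuity of convex lsc functions to place cluster points in $\mi f$, and an Opial-type uniqueness lemma for Fej\'er monotone curves. The only points you gloss over --- the monotonicity $f\l(J_\lam x\r)\le f(x)$ needed to pass from the telescoped sum to the bound on $f$ at the final iterate, and the proof of the Opial-type lemma via asymptotic centers --- are standard and handled in the cited references, so there is no genuine gap.
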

\begin{proof} See \cite{ppa} or \cite[Thm 5.1.16]{mybook}.
\end{proof}
The function values then converge to the infimum of~$f.$
\begin{thm} \label{thm:asym-f-value}
 Let $f\fun$ be a convex lsc function which attains its infimum on $\hs.$ Then, given $x\in\cldom f,$ we have $f\l(S_tx\r)\to\inf f$ as $t\to\infty.$ 
\end{thm}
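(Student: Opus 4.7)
The strategy is to derive the quantitative estimate $f(S_t x)-\inf f\leq \frac{1}{2t} d(x,x^*)^2$ for any chosen minimizer $x^*\in\mi f$; letting $t\to\infty$ then gives $\limsup_{t\to\infty} f(S_t x)\leq \inf f,$ while $f(S_t x)\geq \inf f$ is automatic. I would prove this estimate first for the discrete iterates from \eqref{eq:semigrconstr} and then pass to the limit via lower semicontinuity. Note that weak convergence from Theorem~\ref{thm:asym} is not needed, since any fixed $x^*\in\mi f$ (which exists by hypothesis) will do.

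Fix $t>0$ and $k\in\nat,$ write $\lam\as t/k,$ and set $x_0\as x$ and $x_{j+1}\as J_\lam x_j$ for $j=0,\dots,k-1.$ Applying \eqref{eq:onestepestim} with $y\as x^*$ produces the telescoping inequality
\begin{equation*}
  f(x_{j+1})-f(x^*) \leq \frac{1}{2\lam}\bigl[d(x_j,x^*)^2-d(x_{j+1},x^*)^2\bigr],\qquad j=0,\dots,k-1.
\end{equation*}
In particular $f(x_1)<\infty,$ hence $x_j\in\dom f$ for every $j\geq 1.$ Applying \eqref{eq:onestepestim} once more with $y\as x_j$ (legal now that $f(x_j)<\infty$) gives $f(x_{j+1})\leq f(x_j),$ so the values $f(x_1)\geq f(x_2)\geq\dots\geq f(x_k)$ form a nonincreasing chain.

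Summing the telescoping estimate over $j=0,\dots,k-1$ and using monotonicity to replace each $f(x_{j+1})$ on the left-hand side by the smaller value $f(x_k)$ yields
\begin{equation*}
  k\bigl[f(x_k)-f(x^*)\bigr]\;\leq\; \frac{k}{2t}\bigl[d(x,x^*)^2-d(x_k,x^*)^2\bigr]\;\leq\; \frac{k}{2t}\,d(x,x^*)^2,
\end{equation*}
so $f(x_k)-f(x^*)\leq \frac{1}{2t} d(x,x^*)^2.$ Since $x_k\to S_t x$ by \eqref{eq:semigrconstr} and $f$ is lsc,
\begin{equation*}
  f(S_t x)-f(x^*)\;\leq\; \liminf_{k\to\infty}\bigl[f(x_k)-f(x^*)\bigr]\;\leq\; \frac{1}{2t}\,d(x,x^*)^2.
\end{equation*}
Sending $t\to\infty$ concludes the proof.

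The one delicate point, rather than a genuine obstacle, is that $x\in\cldom f$ need not lie in $\dom f,$ so $f(x_0)$ may be infinite; this is why the monotonicity of $f$ along the iterates is established starting at $x_1$ and not at $x_0.$ The crucial structural feature of the telescoping argument is that after summation the bound $\frac{1}{2t} d(x,x^*)^2$ is independent of~$k,$ which is exactly what allows lower semicontinuity to transport the estimate from the discrete iterates to the semigroup trajectory.
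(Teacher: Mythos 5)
Your proof is correct and is essentially the argument the paper points to by citation (the paper itself only refers to \cite{ppa} and to Proposition 5.1.12 of \cite{mybook}): the telescoping use of \eqref{eq:onestepestim} with $y=x^*$, monotonicity of $f$ along the resolvent iterates, and lower semicontinuity passed through \eqref{eq:semigrconstr} is exactly the standard route. In fact you obtain the quantitative bound $f(S_tx)-\inf f\le \frac{1}{2t}d(x,x^*)^2$, which is stronger than the stated qualitative convergence, and your handling of $x\in\cldom f\setminus\dom f$ via starting the monotonicity at $x_1$ is sound.
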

\begin{proof} This can be seen from the proof of Theorem~\ref{thm:asym} in \cite{ppa}, or for instance like in \cite[Prop. 5.1.12]{mybook}.
\end{proof}

The proof of our main theorem uses gradient flows of convex functions to define a desired Lipschitz retraction. These convex functions have a special form, namely they are given as a finite sum of some elementary convex functions, and their gradient flow can be approximated by the Lie-Trotter-Kato formula. We will now state the necessary facts precisely. Let $N\in\nat$ and consider a function $f\fun$ of the form 
\begin{equation} \label{eq:fsum}
f\as\sum_{j=1}^N f_j,
\end{equation}
where $f_j\fun$ are convex lsc functions for every $j=1,\dots,N.$ Let us denote the resolvent of the function~$f_j$ by~$J_\lam^{[j]}$ and the gradient flow semigroup of~$f$ by~$S_t.$
\begin{thm}[Lie-Trotter-Kato formula]
Let $f\fun$ be of the form~\eqref{eq:fsum}. Then we have
\begin{equation} \label{eq:lie}
S_tx = \lim_{k\to\infty} \l(J_{\frac{t}{k}}^{[N]}\circ\dots\circ J_{\frac{t}{k}}^{[1]}\r)^{k}x , 
\end{equation}
for every $t\in[0,\infty)$ and $x\in\cldom f.$
\end{thm}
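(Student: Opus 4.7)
The plan is, for fixed $t>0$ and $x\in\cldom f$, to compare the Lie--Trotter iterates
\[ L_k \as \l(J_{t/k}^{[N]}\circ\dots\circ J_{t/k}^{[1]}\r)^{k}x \]
appearing on the right--hand side of \eqref{eq:lie} with the full--resolvent iterates $R_k \as \l(J_{t/k}\r)^{k}x$. The sequence $R_k$ converges to $S_tx$ by the defining formula \eqref{eq:semigrconstr}, so the task reduces to showing $d(L_k,R_k)\to 0$ as $k\to\infty$.

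The key ingredient is a one--step comparison estimate of the form
\[ d\l(T_\lam y,\ J_\lam y\r) \le \lam\,\omega(\lam,y), \]
where $T_\lam\as J_\lam^{[N]}\circ\dots\circ J_\lam^{[1]}$ and $\omega(\lam,y)\to 0$ as $\lam\to 0$, uniformly for $y$ in any fixed bounded subset of $\dom f$. To prove it, I would set $y_0\as y$, $y_j\as J_\lam^{[j]}y_{j-1}$, and $z\as J_\lam y$, then apply \eqref{eq:onestepestim} to each factor $J_\lam^{[j]}$ with comparison point $z$, and to the resolvent $J_\lam$ with comparison point $y_N$. Summing the resulting $N+1$ inequalities, the function values $\sum_j f_j$ cancel against $f$, leaving a relation between $d(y_{j-1},y_j)$, $d(y_N,z)$ and $d(y,z)$. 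The convexity of each $f_j$ and the CAT$(0)$ inequality \eqref{eq:cat} applied to the triangles formed by consecutive $y_j$'s together with $y$ and $z$ should allow one to absorb the cross--terms and extract the small factor $\omega(\lam,y)$ from the quadratic structure of \eqref{eq:onestepestim}. This mirrors the Hilbert--space computation which, via $J_\lam y = y - \lam\nabla f(y) + o(\lam)$, gives $d(T_\lam y,J_\lam y)=O(\lam^2)$.

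Once the one--step estimate is in hand, the rest is a telescoping argument. Every resolvent is nonexpansive (a direct consequence of \eqref{eq:onestepestim}), and hence so are $T_\lam$ and $J_\lam$. Writing the difference via the intermediate compositions $T_{t/k}^{\,k-m}\circ J_{t/k}^{\,m}$ for $m=0,\dots,k$ and using nonexpansiveness of $T_{t/k}^{\,k-m-1}$ yields
\[ d(L_k,R_k) \le \sum_{m=0}^{k-1} d\l(T_{t/k}R_m,\ J_{t/k}R_m\r) \le t\cdot\ol\omega\l(t/k\r), \]
where $\ol\omega(\lam)$ is the supremum of $\omega(\lam,R_m)$ over $m=0,\dots,k-1$. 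A~uniform bound on $(R_m)$ for all $k$ follows by iterating \eqref{eq:onestepestim} with any fixed point of $\dom f$, so $\ol\omega(\lam)\to 0$ as $\lam\to 0$. Hence $d(L_k,R_k)\to 0$, which combined with $R_k\to S_tx$ yields \eqref{eq:lie}.

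The main obstacle is the one--step comparison estimate. In a Hilbert space it reduces to a first--order expansion of the resolvent and is an elementary Taylor calculation, but in a Hadamard space no such expansion is available; one must work entirely with the quadratic variational inequalities \eqref{eq:onestepestim} and \eqref{eq:cat}, carefully tracking the positions of the auxiliary points $y_1,\dots,y_N$ relative to $z=J_\lam y$. This is precisely the technical content of \cite{lie,stoj}.
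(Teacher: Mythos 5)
Your argument hinges entirely on the one-step comparison estimate $d\l(T_\lam y, J_\lam y\r)\le \lam\,\omega(\lam,y)$ with $\omega(\lam,y)\to0$ uniformly on bounded subsets of $\dom f$, and this estimate is not proved in your proposal — worse, it is false at the level of generality of the theorem, where the $f_j$ are merely convex lsc and typically nonsmooth (as are the distance-type functions the paper actually applies the formula to). Concretely, take $\hs=\rls$, $N=2$, $f_1(x)=|x|$, $f_2(x)=cx$ with $0<c<1$, and $y=0$. Then $J_\lam y=0$ (since $0$ minimizes $|x|+cx$), while $J_\lam^{[1]}y=0$ and $J_\lam^{[2]}0=-c\lam$, so $d\l(T_\lam y,J_\lam y\r)=c\lam$: the modulus $\omega$ is the constant $c$ and does not vanish as $\lam\to0$. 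Your telescoping then only yields $d(L_k,R_k)\le k\cdot c\,(t/k)=ct$, which does not tend to zero. (In this example \eqref{eq:lie} does hold — the split iterates stay within $ct/k$ of $S_t0=0$ — but the convergence comes from cancellation along the iteration, which a per-step triangle-inequality bound cannot detect.) The Hilbert-space heuristic $J_\lam y=y-\lam\nabla f(y)+o(\lam)$ that motivates your lemma requires differentiability of $f$ and is simply unavailable here; for multivalued subdifferentials the one-step discrepancy between the split and unsplit resolvents is genuinely of order $\lam$, not $o(\lam)$.

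This is precisely why the known proofs do not take your route. The paper itself offers no internal proof but cites \cite{stoj} for the original argument (which uses weak convergence/ultrapower techniques) and \cite{lie} for a simplified one, which iterates the variational inequality \eqref{eq:onestepestim} for the individual resolvents against fixed comparison points, sums the resulting discrete inequalities, and identifies the limit through the variational characterization of the gradient flow — the errors are absorbed into summed function-value differences rather than estimated as pointwise one-step distances from $J_{t/k}$. Since you yourself defer ``precisely the technical content'' of your key lemma to \cite{lie,stoj}, and that lemma is false as stated, the proposal does not constitute a proof: the reduction to \eqref{eq:semigrconstr} via $d(L_k,R_k)\to0$ with a per-step $o(\lam)$ bound is not a viable strategy in this setting.
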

\begin{proof}
 The original proof appeared in~\cite{stoj}. For a simplified proof, see~\cite{lie}.
\end{proof}

In fact, the gradient flow we are going to use in the proof of Theorem~\ref{thm:main} is not on~$\hs,$ but on its $n$-th power. The $n$-th power of~$\hs,$ denoted by~$\hs^n,$ is equipped with the metric
\begin{equation*}d(x,y)\as\biggl(\sum_{j=1}^n d\l(x_j,y_j\r)^2 \biggr)^\half,\qquad x,y\in\hs^n,\end{equation*}
and is then also a Hadamard space. Note that we use the same symbol $d$ for the original metric on $\hs$ as well as for the metric on $\hs^n.$ It is always clear from the arguments which one is meant. Given a point $x=\l(x_1,\dots,x_n\r)\in\hs^n,$ we shall denote $\{x\}\as\l\{x_1,\dots,x_n\r\},$ a subset of $\hs.$ However, given $x,y\in\hs^n,$ we shall denote the Hausdorff distace between $\{x\}$ and $\{y\}$ by $\hm(x,y)$ instead of $\hm\l(\{x\},\{y\}\r).$

\section{The existence of Lipschitz retractions}
The desired Lipschitz retractions $r\col\hs(n)\to\hs(n-1)$ will be defined via a gradient flow of a convex functional on~$\hs^n.$ Specifically, we define this functional as
\begin{equation} \label{eq:f}
 F(x)\as\sum_{1\le i<j\le n} d\l(x_i,x_j\r),\qquad x=\l(x_1,\dots,x_n\r)\in\hs^n.
\end{equation}
and show that it is indeed convex and Lipschitz.
\begin{lem}
 The function $F\col\hs^n\to\rls$ is convex and $n^{\frac32}$-Lipschitz. 
\end{lem}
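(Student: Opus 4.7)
The plan is to handle convexity and the Lipschitz estimate separately, reducing everything to an elementary property of the distance function in a Hadamard space combined with the Cauchy--Schwarz inequality.

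For convexity, I would first note that in any Hadamard space the distance $d\col\hs\times\hs\to\rls$ is convex with respect to pairs of geodesics: this is exactly inequality \eqref{eq:jointconv}. Since any geodesic $\gam\geo$ in $\hs^n$ is given coordinatewise (that is, $\gam(t)=(\gam_1(t),\dots,\gam_n(t))$ where each $\gam_k$ is a geodesic in $\hs$), the composition $t\mapsto d(\gam_i(t),\gam_j(t))$ is convex on $[0,1]$ for every fixed pair $i<j$. Summing over $1\le i<j\le n$ preserves convexity, so $F$ is convex.

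For the Lipschitz bound, I would use the reverse triangle inequality termwise:
\begin{equation*}
\l|d(x_i,x_j)-d(y_i,y_j)\r|\le d(x_i,y_i)+d(x_j,y_j),
\end{equation*}
for all $x,y\in\hs^n$ and $1\le i<j\le n$. Summing over all pairs $i<j$, each index $k\in\{1,\dots,n\}$ is counted exactly $n-1$ times, giving
\begin{equation*}
\l|F(x)-F(y)\r|\le (n-1)\sum_{k=1}^n d(x_k,y_k).
\end{equation*}
Then by Cauchy--Schwarz,
\begin{equation*}
\sum_{k=1}^n d(x_k,y_k)\le \sqrt{n}\biggl(\sum_{k=1}^n d(x_k,y_k)^2\biggr)^\half = \sqrt{n}\, d(x,y),
\end{equation*}
so $|F(x)-F(y)|\le (n-1)\sqrt{n}\, d(x,y)\le n^{\frac32}d(x,y)$, as desired.

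Neither step presents a real obstacle: the convexity of $d$ on $\hs\times\hs$ is a standard consequence of the CAT$(0)$ condition already recorded in \eqref{eq:jointconv}, and the Lipschitz estimate is a pure counting argument plus Cauchy--Schwarz. The only mild subtlety worth stating explicitly is that geodesics in the $\ell^2$-product $\hs^n$ really are coordinatewise geodesics, which justifies applying \eqref{eq:jointconv} coordinate-by-coordinate.
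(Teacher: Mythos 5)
Your proof is correct and follows essentially the same route as the paper: convexity from \eqref{eq:jointconv} applied coordinatewise, and the Lipschitz bound from the reverse triangle inequality plus Cauchy--Schwarz. Your counting even yields the slightly sharper constant $(n-1)\sqrt{n}$, which you then relax to $n^{\frac32}$ as stated.
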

\begin{proof}
 Convexity follows from \eqref{eq:jointconv}. For the Lipschitz property, we estimate
 \begin{equation*}
  \l|F(x)-F(y)\r| \le \sum_{1\le i<j\le n} \l|d\l(x_i,x_j\r)-d\l(y_i,y_j\r) \r| 
  \le \sum_{1\le i<j\le n} d\l(x_i,y_i\r)+d\l(x_j,y_j\r) \le n^{\frac32} d(x,y),
 \end{equation*}
where we twice used the triangle inequality and then the Cauchy-Scharz inequality.
\end{proof}

We are now ready to prove the main theorem.
\begin{thm}\label{thm:main}
 Let $(\hs,d)$ be a Hadamard space. Then for each integer $n\ge2$ there exists a Lipschitz retraction $r\col\hs(n)\to\hs(n-1)$ with Lipschitz constant $\max(4n^{\frac32}+1,2n^2+n^\half).$
\end{thm}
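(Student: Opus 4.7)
The plan is to define the desired retraction via the gradient flow of $F$ on $\hs^n$. For $\xi\in\hs(n)$, choose any tuple $x\in\hs^n$ with $\{x\}=\xi$ (allowing repeated entries when $|\xi|<n$), and set
\[
 \tau(x)\as\inf\bigl\{t\ge 0: \text{at least two coordinates of } S_t x \text{ coincide}\bigr\},\qquad r(\xi)\as\{S_{\tau(x)}x\}.
\]
Three observations make $r$ a well-defined retraction from $\hs(n)$ onto $\hs(n-1)$. First, since $F$ is permutation-invariant, so is every resolvent $J_\lam^{[ij]}$ of the summand $d(x_i,x_j)$, and hence $S_t$ is permutation-equivariant by \eqref{eq:lie}, which makes the construction descend from $\hs^n$ to $\hs(n)$. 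Second, the resolvent of $d(x_i,x_j)$ fixes any tuple with $x_i=x_j$ (both summands in its defining minimization already vanish there), so $S_t$ preserves coincidences among coordinates, giving $\tau(x)=0$ and $r(\xi)=\xi$ whenever $|\xi|<n$. Third, by the very definition of $\tau$, the image $r(\xi)$ lies in $\hs(n-1)$ whenever $|\xi|=n$.

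The stopping time $\tau$ is finite. A direct computation---minimizing the convex objective of \eqref{eq:defres} restricted to the geodesic $[x_i,x_j]$---shows that the resolvent of $d(x_i,x_j)$ moves these two coordinates toward each other along their joining geodesic, reducing $d(x_i,x_j)$ by exactly $2\lam$ when $2\lam\le d(x_i,x_j)$ and collapsing them to their midpoint otherwise. Combined with \eqref{eq:lie} this yields a quantitative upper bound on $\tau$ in terms of $\diam\{x\}$; alternatively, Theorem~\ref{thm:asym-f-value} gives $F(S_t x)\to 0$, which by itself forces the flow to hit the coincidence set in finite time.

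For the Lipschitz estimate I would combine three ingredients: the nonexpansiveness \eqref{eq:nonexp} of $S_t$; the time-Lipschitz bound $d(S_s z,S_t z)\le n^{\frac32}|s-t|$, which follows from $F$ being $n^{\frac32}$-Lipschitz; and a Lipschitz estimate $|\tau(x)-\tau(y)|\le C\hm(x,y)$ with $C$ polynomial in $n$. The triangle inequality then yields
\[
 d\bigl(S_{\tau(x)}x,S_{\tau(y)}y\bigr)\le d(x,y)+n^{\frac32}|\tau(x)-\tau(y)|,
\]
and an optimal matching between $\xi$ and $\eta$ used to select lifts $x,y$ with $d(x,y)$ comparable to $\hm(\xi,\eta)$ converts this to an estimate on $\hm(r(\xi),r(\eta))$. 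A case split according to whether $\hm(\xi,\eta)$ is small or large relative to the minimum pairwise gap in $\xi$ accounts for the maximum of $4n^{\frac32}+1$ and $2n^2+n^{\frac12}$.

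The main obstacle is the Lipschitz control of $\tau$ near the boundary $\hs(n-1)$, where $\tau\equiv 0$: one must show $\tau(x)\to 0$ at a controlled linear rate as $x$ approaches a configuration with coincident coordinates. To extract this I would track the minimum pairwise distance $\min_{i<j}d\bigl((S_tx)_i,(S_tx)_j\bigr)$ through the Lie-Trotter-Kato discretization, using that at each stage every summand's resolvent reduces its own pairwise distance by $2t/k$ independently of the other coordinates; passing to the limit $k\to\infty$ then gives a linear-in-$t$ decay for the minimum gap, and hence the required Lipschitz estimate on $\tau$.
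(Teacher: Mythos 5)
Your architecture (flow under the gradient semigroup of $F$, stop at the first coincidence time, split into cases according to $\hm$ versus the minimum gap) is the same as the paper's, but the decisive estimate is asserted rather than proved, and the justification you sketch for it would fail. It is true that the resolvent of the single summand $d(x_i,x_j)$ pulls $x_i$ and $x_j$ together along their geodesic by $2\lam$ (or collapses them to the midpoint), but this does \emph{not} happen ``independently of the other coordinates'': inside one sweep of the Lie--Trotter--Kato product \eqref{eq:lie}, the resolvents of the pairs $(1,i)$ and $(2,i)$ with $i\ge 3$ also move the coordinates $x_1$ and $x_2$, and a priori they can increase $d(x_1,x_2)$ and wipe out the $2\lam$ gained from $J_\lam^{[1,2]}$. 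The entire technical content of the paper's Step~1 is to rule this out: a comparison-triangle argument in $\rls^2$ (using \eqref{eq:comparison}) shows that the composition $J_\lam^{[2,i]}\circ J_\lam^{[1,i]}$ does not increase $d(x_1,x_2)$ when that distance is at least $\lam$, and increases it by at most $\lam$ otherwise (claims \eqref{i:i}--\eqref{i:ii}); only then does the bookkeeping along the discretization yield $T(x)\le\half\delta(x)$, i.e.\ \eqref{eq:tbound}. Without an argument of this kind you have established neither the finiteness of $\tau$, nor the bound $\tau(x)\le\half\delta(x)$ in terms of the \emph{minimum} gap (a bound in terms of $\diam\{x\}$ is not enough: the first case of your case split needs $\hm(r(\xi),\xi)\le n^{\frac32}\delta$, not $n^{\frac32}\diam$), nor the Lipschitz control of $\tau$ that you yourself identify as the main obstacle. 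The proposed shortcut via Theorem~\ref{thm:asym-f-value} does not repair this: $F(S_tx)\to 0$ as $t\to\infty$ only says the configuration contracts asymptotically; it does not force two coordinates to coincide at any finite time.

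Two further points. First, the claim that $S_t$ preserves coincidences does not follow from the observation that $J_\lam^{[i,j]}$ fixes tuples with $x_i=x_j$: a resolvent $J_\lam^{[i,k]}$ with $k\ne j$ moves $x_i$ while leaving $x_j$ fixed, so it destroys the coincidence inside the discretization; the correct argument is the permutation equivariance you already invoke (a transposition fixes a tuple with a repeated entry, hence fixes its orbit under $S_t$), which gives $\tau=0$ and $r=\id$ on $\hs(n-1)$. Second, granting \eqref{eq:tbound}, your route through a Lipschitz estimate for $\tau$ does work and is essentially the paper's Step~3 in disguise: for matched lifts one flows $y$ for time $\tau(x)$, uses nonexpansiveness \eqref{eq:nonexp} to see that the resulting tuple has minimum gap at most $2n^{\half}\hm(x,y)$, and then applies \eqref{eq:tbound} together with the speed bound once more; the paper phrases this via the semigroup identity $r\l(S_{T(x)}y\r)=r(y)$ instead of via $|\tau(x)-\tau(y)|$, and with the speed bound $2n^{\frac32}$ of \eqref{eq:spread} the constants come out as stated. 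So the missing piece is precisely the comparison-triangle control of the cross resolvents, which is where the Hadamard geometry actually enters the proof.
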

\begin{proof} 
We divide the proof into several steps.

\textbf{Step 1.} Let $J_\lam$ and $S_t$ be the resolvent and gradient flow semigroup, respectively, associated with the function $F$ from \eqref{eq:f}. Let us denote
\begin{equation*}
 D\as\l\{x=\l(x_1,\dots,x_n\r)\in\hs^n\col x_i=x_j \textnormal{ for some } 1\le i<j\le n \r\}.
\end{equation*}
Given $x\in\hs^n,$ we define
\begin{equation}
 \delta(x)\as \min_{1\le i<j\le n} d\l(x_i,x_j\r).
\end{equation}
and $T(x)\as\inf\l\{t>0\col S_tx\in D\r\}.$ We will first show that
\begin{equation}\label{eq:tbound}
T(x)\le\half\delta(x). 
\end{equation}
In order to be able to apply the Lie-Trotter-Kato formula \eqref{eq:lie}, denote by $J_\lam^{[i,j]}$ the resolvent associated with the function
\begin{equation*} 
 x\mapsto d\l(x_i,x_j\r),\qquad x=\l(x_1,\dots,x_n\r)\in\hs^n,
\end{equation*}
where $1\le i<j\le n.$ Then \eqref{eq:lie} reads
\begin{equation} \label{eq:lief}
S_tx = \lim_{k\to\infty} \l(R_{\frac{t}{k}}\r)^k x,\qquad x\in\cldom f,
\end{equation}
where
\begin{equation*}
R_{\frac{t}{k}}\as J_{\frac{t}{k}}^{[n-1,n]}\circ\dots\circ  J_{\frac{t}{k}}^{[1,6]}\circ J_{\frac{t}{k}}^{[4,5]}\circ  J_{\frac{t}{k}}^{[3,5]}\circ  J_{\frac{t}{k}}^{[2,5]}\circ  J_{\frac{t}{k}}^{[1,5]}\circ J_{\frac{t}{k}}^{[3,4]}\circ J_{\frac{t}{k}}^{[2,4]}\circ J_{\frac{t}{k}}^{[1,4]}\circ J_{\frac{t}{k}}^{[2,3]}\circ J_{\frac{t}{k}}^{[1,3]}\circ J_{\frac{t}{k}}^{[1,2]}.
\end{equation*}

Next we claim that for $\lam>0$ and $i=3,\dots,n,$ the following holds:
\begin{enumerate}
 \item if $d\l(y_1,y_2\r)\ge\lam,$ then $ d\l(z_1,z_2\r)\le d\l(y_1,y_2\r),$ \label{i:i}
 \item if $d\l(y_1,y_2\r)<\lam,$ then $ d\l(z_1,z_2\r)\le d\l(y_1,y_2\r)+\lam,$ \label{i:ii}
\end{enumerate}
for every $y=\l(y_1,\dots,y_n\r)\in\hs^n$ and $z\as J_\lam^{[2,i]}J_\lam^{[1,i]}y.$ We will now show both~\eqref{i:i} and~\eqref{i:ii} by using comparison triangles. To this end denote $u\as J_\lam^{[1,i]}y$ and consider the triangle with vertices $y_1,y_2,u_i$ in $\hs$ along with its comparison triangle with vertices $\yob,\ytb,\uib\in\rls^2.$ Then denote the comparison points of $z_1$ and $z_2$ by $\zob$ and $\ztb,$ respectively. Next observe that~\eqref{i:i} and~\eqref{i:ii} hold true if we replace all the points involved by their comparison points (and consider the Euclidean distance in $\rls^2$ of course). This can be seen by elementary geometry arguments in $\rls^2.$ Finally, applying~\eqref{eq:comparison} gives~\eqref{i:i} and~\eqref{i:ii}.

Choose $x\in\hs^n$ and $k\in\nat.$ Denote $\lam\as\frac{\delta(x)}{2k}.$ Without loss of generality one may assume $d\l(x_1,x_2\r) = \delta(x).$ Define now
\begin{equation*}
 x^{l,[i,j]}\as J_\lam^{[i,j]}\circ\cdots\circ J_\lam^{[2,3]} \circ J_\lam^{[1,3]} \circ J_\lam^{[1,2]} \circ \l(R_{\frac{t}{k}}\r)^{l-1} x,
\end{equation*}
for each $l=1,\dots,k$ and $1\le i<j\le n,$ and observe that~\eqref{i:i} and~\eqref{i:ii} imply
\begin{equation} \label{eq:laststep}
 d\l(x_1^{k,[1,n]},x_2^{k,[2,n]}\r)\le\lam = \frac{\delta(x)}{2k},
\end{equation}
where the subscript indices denote the coordinates in $\hs^n.$ Indeed, each application of~$J_\lam^{[1,2]}$ shortens the distance between the first two coordinates by additive constant~$2\lam$ while the application of any other resolvent than~$J_\lam^{[1,2]}$ does not expand it, or expands it by additive constant~$\lam$ at most --- as we know from~\eqref{i:i} and~\eqref{i:ii}. More precisely, we have
\begin{align*}
 d\l(x_1,x_2\r) &= \delta(x), \\
 d\l(x_1^{1,[1,2]},x_2^{1,[1,2]}\r) &=\max\l(0,\delta(x)-\frac{\delta(x)}{k}\r) ,\\
 d\l(x_1^{1,[n-1,n]},x_2^{1,[n-1,n]}\r) &\le \max\l(\frac{\delta(x)}{k},\delta(x)-\frac{\delta(x)}{k}\r), \\
 d\l(x_1^{2,[1,2]},x_2^{2,[1,2]}\r) &\le\max\l(0,\delta(x)-2\frac{\delta(x)}{k}\r), \\
 d\l(x_1^{2,[n-1,n]},x_2^{2,[n-1,n]}\r) &\le \max\l(\frac{\delta(x)}{k},\delta(x)-2\frac{\delta(x)}{k}\r), \\
 & \vdots \\
 d\l(x_1^{k-1,[n-1,n]},x_2^{k-1,[n-1,n]}\r) &\le \max\l(\frac{\delta(x)}{k},\delta(x)-(k-1)\frac{\delta(x)}{k}\r)=\frac{\delta(x)}{k}, \\
 d\l(x_1^{k,[1,2]},x_2^{k,[1,2]}\r) &  =0.
\end{align*}
and hence~\eqref{eq:laststep} holds true.

Passing to the limit $k\to\infty$ in~\eqref{eq:laststep} and recalling \eqref{eq:lief} then give
\begin{equation*}
 d\l(\l(S_{\half\delta(x)}x\r)_1,\l(S_{\half\delta(x)}x\r)_2\r)=0,
\end{equation*}
or, in other words, we have just proved~\eqref{eq:tbound}.

\textbf{Step 2.}
Let $x,y\in\hs^n.$ By \eqref{eq:onestepestim} we have
\begin{equation*}
 \frac1{2\lam} d\l(J_\lam x,y\r)^2  \leq F(y)-F\l(J_\lam x\r) +\frac1{2\lam} d(x,y)^2.
\end{equation*}
Consider now $t\in\l[0,T(x)\r].$ Fix $k\in\nat$ and employ the above inequality $k$-times to obtain
\begin{align*}
 d\l(J_{\frac{t}{k}}x,y\r)^2 & \leq \frac{2t}{k}\l[F(y)-F\l(J_{\frac{t}{k}}x\r)\r] +d(x,y)^2, \\
d\l(\l(J_{\frac{t}{k}}\r)^2 x,y\r)^2 & \leq \frac{2t}{k}\l[F(y)-F\l(\l(J_{\frac{t}{k}}\r)^2 x\r)\r] +d\l(J_{\frac{t}{k}}x,y\r)^2, \\
& \vdots \\
d\l(\l(J_{\frac{t}{k}}\r)^k x,y\r)^2 & \leq \frac{2t}{k}\l[F(y)-F\l(\l(J_{\frac{t}{k}}\r)^k x\r)\r] +d\l(\l(J_{\frac{t}{k}}\r)^{k-1}x,y\r)^2 .
\end{align*}
Summing up these inequalities, dividing by $t^2$ and putting $x\as y$ gives
\begin{equation*} \frac{d\l(\l(J_{\frac{t}{k}}\r)^k x,x\r)^2}{t^2} \leq 2\frac{F\l(x\r)-F\l(\l(J_{\frac{t}{k}}\r)^k x\r)}{t} \leq 2n^{\frac32}\frac{d\l(x,\l(J_{\frac{t}{k}}\r)^k x\r)}{t},\end{equation*}
and after taking $\limsup_{k\to\infty}$ we obtain
\begin{equation*} 
\frac{d\l(S_t x,x\r)}{t} \leq 2n^{\frac32}.
\end{equation*}
Hence, by virtue of~\eqref{eq:tbound},
\begin{equation} \label{eq:spread}
\hm\l(S_t x,x\r) \le d\l(S_t x,x\r) \le 2tn^{\frac32} \le \delta(x) n^{\frac32}.
\end{equation}

For future reference we also record that the nonexpansiveness of the gradient flow semigroup~\eqref{eq:nonexp} implies
\begin{equation} \label{eq:flownonexp}
 \hm\l(S_t x,S_t y\r) \le d\l(S_t x, S_t y\r) \le d(x,y) \le n^{\half} \max_{1\le j\le n}d\l(x_j,y_j\r).
\end{equation}

\textbf{Step 3.} Given $x\in\hs(n),$ we number its elements $\l\{x_1,\dots,x_n\r\}$ and consider $x'\as\l(x_1,\dots,x_n\r)\in\hs^n.$ We may assume that $d\l(x_1,x_2\r)=\delta\l(x'\r).$ Then we define $r(x)\as \l\{S_{T\l(x'\r)} x'\r\}.$ However, we will write $x$ instead of $x'$ in the sequel. Let us now show that $r\col\hs(n)\to\hs(n-1)$ is a Lipschitz retraction. First of all, observe that $r$ is the identity on (the canonical embedding of) $\hs(n-1).$ To prove the Lipschitz property, choose $x,y\in\hs(n)$ and examine the following two alternatives. If $\delta(x)+\delta(y)\le4\hm(x,y),$ then 
\begin{equation*}
 \hm\l(r(x),r(y)\r)\le \hm\l(r(x),x\r)+\hm\l(x,y\r)+\hm\l(y,r(y)\r)\le n^{\frac32} \delta(x)+\hm(x,y)+n^{\frac32}\delta(y)\le \l(4n^{\frac32}+1\r)\hm(x,y)
\end{equation*}
where we used \eqref{eq:spread} to obtain the second inequality.

If, on the other hand, $\delta(x)+\delta(y)>4\hm(x,y),$ then we may assume $\delta(x)>2\hm(x,y)$ without loss of generality. The fact $\delta(x)>2\hm(x,y)$ then implies that we can renumber the points $\l\{y_1,\dots,y_n \r\}$ in such a way that
\begin{equation} \label{eq:lastestim}
 d\l(x_j,y_j\r)\le \hm(x,y)
\end{equation}
for each $j=1,\dots,n.$ In the remainder of the proof, we will use \eqref{eq:lastestim} only, without referring to $\delta(x)>2\hm(x,y).$ We can hence without loss of generality assume $T(x)\le T(y)$ on account that the roles of $x$ and $y$ in \eqref{eq:lastestim} are interchangeable. Recall that $r(x)=S_{T(x)}x$ and put $z\as S_{T(x)}y.$ Inequality~\eqref{eq:flownonexp} implies that $\hm\l(r(x),z\r)\le n^{\half} \hm(x,y).$ Consequently,
\begin{equation*}
 \delta(z)=\delta(z)-\delta\l(r(x)\r) \le 2\hm\l(z,r(x)\r)\le 2n^{\half} \hm(x,y).
\end{equation*}
By \eqref{eq:spread} we have
\begin{equation*}
 \hm\l(z,r(z) \r)\le n^{\frac32}\delta(z)\le 2n^2 \hm(x,y).
\end{equation*}
Finally, one arrives at
\begin{equation*}
 \hm\l(r(x),r(y) \r) \le \hm\l(r(x),z \r)+\hm\l(z,r(z) \r)+\hm\l(r(z),r(y) \r)\le n^\half \hm(x,y)+2n^2 \hm(x,y)+0,
\end{equation*}
where the zero on the right hand side is due to the semigroup property of the gradient flow. The proof is complete.
\end{proof}
\begin{rem}[Asymptotic behavior of the flow]
Given $x\in\hs^n,$ denote $\Delta(x)\as \max_{1\le i<j\le n} d\l(x_i,x_j\r).$ Using the same arguments as in the previous proof, we can show that for $\tau\as \half\Delta(x),$ one obtains $S_\tau x\in\mi F.$ Alternatively, we can obtain the asymptotic behavior of the flow as follows. By Theorem \ref{thm:asym}, given $x\in\hs^n,$ the flow $S_t x$ weakly converges to a point $x^*=\l(x_1^*,\dots,x_n^*\r)\in\mi F;$ and obviously $x_1^*=\cdots=x_n^*.$ Next we show that this convergence is in fact strong. To this end, we first observe that
\begin{equation*}
 x_1^*\in\bigcap_{t\in[0,\infty)} \clco \l\{ S_t x \r\}.
\end{equation*}
Indeed, by virtue of \eqref{eq:semigrconstr} and the semigroup property it is sufficient to show that $\l\{J_\lam x\r\} \subset \clco\{x\}.$ This inclusion however follows directly by a projection argument. Now use Theorem \ref{thm:asym-f-value} to conclude $F\l(S_t x\r)\to0$ and therefore $\diam\clco\l\{ S_t x \r\}\to0.$ We hence have $S_tx\to x^*.$
\end{rem}
\begin{rem}[Open questions]
 We end the paper by posing a few questions, many of which have appeared already in~\cite{kovalev}. The Lipschitz constant of $r\col \hs(n)\to\hs(n-1)$ guaranteed by Theorem~\ref{thm:main} is $\max(4n^{\frac32}+1,2n^2+n^\half).$ Can one improve upon this constant? Can one show that, for every $n\in\nat$ with $n\ge2,$ there exist Lipschitz retractions $r\col \hs(n)\to\hs(n-1)$ with Lipschitz constants independent of~$n$? Can one extend Theorem \ref{thm:main} into spaces of nonpositive curvature in the sense of Busemann? In particular, does an analog of Theorem \ref{thm:main} hold in strictly convex or uniformly convex Banach spaces? Can one extend Theorem \ref{thm:main} into $p$-uniformly convex spaces? Recall that a geodesic metric space $(X,d)$ is called $p$-uniformly convex (for $p\ge2$) if there exists $K>0$ such that
 \begin{equation*} 
 d\l(z,x_t\r)^p\leq (1-t)d\l(z,x_0\r)^p+td\l(z,x_1\r)^p-Kt(1-t)d\l(x_0,x_1\r)^p,
\end{equation*}
for each $z,x_0,x_1\in\hs$ and $t\in[0,1],$ where $x_t\as(1-t)x_0 + tx_1.$ This definition was introduced in \cite[Definition 3.2]{naor-silberman} as a generalization of $p$-uniform convexity in Banach spaces.
\end{rem}


\bibliographystyle{siam}
\bibliography{retracts}

\end{document}